\newtheorem{theorem}{Theorem}[section]
\newtheorem{proposition}[theorem]{Proposition}
\theoremstyle{definition}
\newtheorem{definition}[theorem]{Definition}
\newtheorem{remark}[theorem]{Remark}
\newtheorem*{theorem*}{Theorem}
\newtheorem*{proposition*}{Proposition}
\newtheorem*{definition*}{Definition}
\numberwithin{equation}{section}
\newenvironment{abc}{\begin{enumerate}[{\rm (a)}]}{\end{enumerate}}
\newenvironment{iiv}{\begin{enumerate}[{\rm (i)}]}{\end{enumerate}}
\def\G{\mathrm{G}}
\def\V{\mathrm{V}}
\def\vv{\mathrm{v}}
\def\ee{\mathrm{e}}
\def\dd{\mathrm{d}}
\def\E{\mathrm{E}}
\def\W{\mathrm{W}}
\def\dom{\mathrm{D}}
\def\NN{\mathbb{N}}
\def\CC{\mathbb{C}}
\def\BB{\mathbb{B}}
\def\RR{\mathbb{R}}
\def\Ell{\mathrm{L}}
\def\diag{\mathrm{diag}}
\def\Id{\mathrm{I}}
\def\LLL{\mathscr{L}}
\begin{document}
\title{Semigroups for flows on limits of graphs} 
\author{Christian Budde}
\address{North-West University, School of Mathematical and Statistical Sciences, Potchefstroom Campus, Private Bag X6001-209, Potchefstroom 2520, South Africa}
\email{christian.budde@nwu.ac.za}

\keywords{strongly continuous semigroups, Trotter--Kato theorems, transport problems, networks, category theory}
\subjclass[2010]{47D05, 65J08, 37L05, 47A58, 82C70, 35R02, 18A30, 20M50}

\begin{abstract}
We use a version of the Trotter--Kato approximation theorem for strongly continuous semigroups in order to study flows on growing networks. For that reason we use the abstract notion of direct limits in the sense of category theory.
\end{abstract}

\date{}
\maketitle

\section*{Introduction}
Transport of goods is nowadays of extreme importance and indispensable considering what mankind needs for daily life. Now imagine a start-up company shipping special goods all over the world. Of course, the company starts with a small network of customers. However, assuming the company grows and retains the already existing routes and customers, the ship network grows and grows. It might come to the point in the development of the company, that one actually lost the view on all specific routes but only knows how the network works since it becomes too big. However, one still wants to know how the transport is going on the whole network.

\medskip
Mathematically speaking, the routes and customers can be described through edges and vertices of a graph, respectively. By giving the graph a metric structure one obtains so-called quantum graphs or networks. Transport can be modeled simply by the linearized flow equation
\[
\frac{\partial}{\partial{t}}w(t,x)=c\frac{\partial}{\partial{x}}w(t,x),
\]
where $c>0$ is the given velocity of the transported good. The combination of flows and networks in a pure operator algebraic setting was first worked out by M.~Kramar Fijav\v{z} and E.~Sikolya \cite{KS2005} for finite graphs. Later on, B.~Dorn generalized this also to infinite graphs \cite{Dorn2008}. The idea is to rewrite the transport equation on the network, which is subjected to some general boundary conditions due to the structure of the network, as an abstract Cauchy problem, which can be solved using $C_0$-semigroups. The techniques are also used frequently by other authors \cite{BF2017,BF2018,BDK2013} even for the situation one asks for weaker solutions \cite{BK2019} by means of bi-continuous semigroups. Notice that the motivating example given above is not the only one, actually, one can imagine a lot more scenarios, e.g., social networks \cite{A2015,AHMGPS2020} or brain connections \cite{SLKSJ2015}, just to name a few.

\medskip
That a network is growing, through adding vertices and edges, means that one has a sequence of at first finite graphs, i.e., one knows the exact structure as described in the situation above. In this case, each finite graph of the sequence yields a phase space $\Ell^1\left(\left[0,1\right],\CC^m\right)$, where $m\in\NN$ is the number of edges of the graph. It is important to notice, that hence the phase space changes depending on number of edges $m$. One assumes that each graph is a subgraph of the subsequent graph in the sequence, describing the above mentioned situation of growing networks. The above mentioned situation, where the network becomes to big in order to know all the network routes will be modelled by an infinite graph. Then we work on the Banach space $\Ell^1\left(\left[0,1\right],\ell^1\right)$. The transition from finite to infinite graphs will be modeled by direct limits in a certain category. The approximation of the transport process on the direct limit graph, is done by a version of the Trotter--Kato approximation theorem, which is originally due to T.~Kato \cite[Chapter IX, Thm.~3.6]{K1976} and H.F.~Trotter \cite[Thm.~5.2 \&~5.3]{T1958} and modified by a version by K.~Ito and F.~Kappel \cite[Thm.~2.1]{TK1998} which we are going to use. Actually this is related to the first Trotter--Kato theorem, cf. \cite[Chapter III, Thm.~4.8]{EN}. In the present paper we extend the work of Ito and Kappel by another approximation theorem, which is an extension of the second Trotter--Kato theorem, cf. \cite[Chapter III, Thm.~4.9]{EN}. We notice, that this paper deals with categorical limits of graphs. However, there is another notion of graph limits due to L.~Lov\'{a}sz \cite{Lov2012} by means of graphons or graphings. This notion of limits totally differs from what we consider within this paper but is worth to mention since this is a interesting topic which is supposed to yield future research on networks and dynamical systems. In fact, graph limits in the sense of L.~Lov\'{a}sz are in the focus of forthcoming papers.

\medskip
The structure of the paper is as follow: in the first section we recall all fundamentals on networks, flows on it and category theory. In Section \ref{sec:AppFlowDirLim} we apply the first Trotter--Kato theorem for our model of growing networks. The following section consists of the second Trotter--Kato approximation theorem in the style of Ito and Kappel.

\section{Preliminaries} 
\subsection{Graphs and networks}

In order to talk about finite and infinite networks we make use of the notation used in \cite{KS2005}, \cite{Dorn2008} or \cite{DKNR2010}. A network is modeled by a finite or infinite \emph{directed graphs} $G=(\V(G),\E(G))$, where $\V(G)=\left\{\vv_i:\ i\in I\right\}$ is the set of \emph{vertices} and $\E(G)=\left\{\ee_j:\ j\in J\right\}\subseteq \V\times\V$ is the set of \emph{directed edges} for some at most countable sets $I,J\subseteq\NN$. For a directed edge $\ee=(\vv_i,\vv_k)$, i$,k\in I$, we call $\vv_i$ the \emph{tail} and  $\vv_k$ the \emph{head} of $\ee$. Further, the edge $\ee$ is an \emph{outgoing edge} of the vertex $\vv_i$ and an \emph{incoming edge} for the vertex $\vv_k$. Recall that a graph $G$ is called \emph{simple} if there are neither loops nor multiple edges in $G$. This means in particular, that there are no edges of the form $\ee=(\vv_i,\vv_i)$, $i\in I$ (i.e., the tail and the head of the edge coincide and so an edge connects a vertex with itself) and no several edges connecting two vertices in the same direction. We also assume that the graph $\G$ is \emph{uniformly locally finite} meaning that each vertex has only finitely many outgoing edges and that the number of outgoing edges is uniformly bounded from above.  

\medskip 
The structure of a graph can be described by its incidence or its adjacency matrix. The \emph{outgoing incidence matrix} $\Phi^-=(\Phi_{ij}^-)$ is defined by
\begin{equation}
\Phi_{ij}^-:=\begin{cases}
1& \text{ if }\vv_i\stackrel{\ee_j}{\longrightarrow}.\\
0 & \text{otherwise},
\end{cases}
\end{equation}
By $\vv_i\stackrel{\ee_j}{\longrightarrow}$ we mean that the vertex $\vv_i$ is the tail of the edge $\ee_j$. The \emph{incoming incidence matrix} $\Phi^+=(\Phi_{ij}^+)$ is defined by 
\begin{equation}
\Phi_{ij}^+:=\begin{cases}
1& \text{ if }\stackrel{\ee_j}{\longrightarrow}\vv_i,\\
0 & \text{ otherwise}.
\end{cases}
\end{equation}
Here $\stackrel{\ee_j}{\longrightarrow}\vv_i$ means that the vertex $\vv_i$ is the head of the edge $\ee_j$. The \emph{incidence matrix} $\Phi$ of the directed graph $G$, describing the structure of the network completely, is then defined by $\Phi:=\Phi^+-\Phi^-$. There are two other important matrices associated to a general graph and which are needed in what follows. The \emph{transposed adjacency matrix} of the graph $G$ is defined by
\[
\mathbb{A}:=\Phi^+\left(\Phi^-\right)^{\top}.
\]
The nonzero entries of $\mathbb{A}$ correspond exactly to the edges of the graph, cf. \cite[p.~280]{Positive2017}. In fact, $\mathbb{A}$ can be described explicitly as
\begin{equation}
\mathbb{A}_{ij}:=\begin{cases}
1& \text{ if }\vv_j\stackrel{\ee_k}{\longrightarrow}\vv_i,\\
0 & \text{ otherwise}.
\end{cases}
\end{equation}
Last but not least we use the so-called \emph{(transposed) adjacency matrix of the line graph} $\mathbb{B} = (\mathbb{B}_{ij})$ defined by $\BB:=\left(\Phi^-\right)^\top\Phi^+$. One can also give an explicit entrywise description as
\begin{equation}\label{eqn:adjMat}
\mathbb{B}_{ij}:=\begin{cases}
1& \text{ if }\stackrel{\ee_j}{\longrightarrow}\vv_k\stackrel{\ee_i}{\longrightarrow},\\
0 & \text{ otherwise}.
\end{cases}
\end{equation}
Notice, that by the assumption of the uniform locally finiteness of the graph the matrix $\BB$ is a bounded operator on $\ell^1:=\ell^1(J)$. 

\medskip
In what follows, we stick to the following mathematical setting. We identify every edge of our graph with the unit interval,  $\ee_j\equiv\left[0,1\right]$ for each $j\in J$, and parametrize it contrary to the direction of the flow, if $c_j>0$, $j\in J$, so that it is assumed to have its tail at the endpoint $1$ and its head at the endpoint $0$, i.e., the material flows from $1$ to $0$. With this assumption, we stay within the framework introduced by B.~Dorn, M.~Kramar Fijav\v{z} and E.~Sikolya, see for example \cite{KS2005,Dorn2008}. For simplicity we use the notation $\ee_j(1)$ and $\ee_j(0)$ for the tail and the head, respectively. In this way we obtain a \emph{metric graph}. 

\subsection{Category theory}
By taking all simple locally finite directed graphs together, one obtains a rich mathematical structure by means of a category. We recall the most important definitions here as they can be found for example in the monographs by S.~Mac Lane \cite{CatML} or S.~Awodey \cite{CatA}. We first recap the basic definition of a category.

\begin{definition}
A category $\mathcal{C}$ consists of \emph{objects} $A,B,C,\ldots$ and \emph{arrows} $f,g,h,\ldots$ (also called \emph{morphisms}). For each arrow $f$ there are given objects $\mathrm{dom}(f)$ and $\mathrm{cod}(f)$ called the \emph{domain} and \emph{codomain} of $f$. We write $f: A \to B$ to indicate that $A = \mathrm{dom}(f)$ and $B = \mathrm{cod}(f)$. Given arrows $f : A \to B$ and $g : B \to C$, that is, with $\mathrm{cod}(f) = \mathrm{dom}(g)$ there is given an arrow $g \circ f : A \to C$ called the \emph{composite} of $f$ and $g$. Furthermore, for each object $A$ there is given an arrow $1_A : A \to A$ called the \emph{identity arrow} of $A$. These arrows are required to satisfy the following axiomas:
\begin{abc} 
	\item \emph{Associativity}, i.e., for $f:A\to B$, $g:B\to C$ and $h:C\to D$ one has 
	\[
	h\circ(g\circ f)=(h\circ g)\circ f,
	\]
	\item \emph{Unit law}, i.e., for each $f:A\to B$ one has 
	\[
	f\circ 1_A=f=1_B\circ f.
	\]
\end{abc}	
\end{definition}

As said above, the objects we are interested in are simple and locally finite graphs. In order to form a category, we need to specify what the arrows in the category are. For that reason we recall the definition of the so-called graph homomorphisms.

\begin{definition}
A \emph{(graph)-homomorphism} between two graphs $G=(\V(G),\E(G))$ and $H=(\V(H),\E(H))$ is a map $\varphi:\V(G)\to\V(H)$ such that $(\vv_1,\vv_2)\in\E(G)$ implies that $(\varphi(\vv_1),\varphi(\vv_2))\in\E(H)$. If such an homomorphism is injective, then $G$ is a \emph{subgraph} of $H$.
\end{definition}

By taking together graphs and their homomorphisms we obtain a category.

\begin{definition}
The category $\mathcal{C}:=\textbf{\text{SimpLocFinG}}$ consists of simple and locally finite graphs as objects and graph homomorphisms as arrows.
\end{definition}

In category theory constructions on categories, e.g., products of categories or free categories, as well as universals and limits play a central role. For the purpose of this paper we recall the following definition of a direct limit in a category. Notice that we simplified the original definition to sequences of objects instead of directed systems of objects, cf. \cite[Chapter V, Sect.~1]{CatML} or \cite[Def.~5.17 \& 5.18]{CatA}, in order to fit in our framework.

\begin{definition}
Let $\mathcal{C}$ be a category and $(A_n)_{n\in\NN}$ a sequence of objects in $\mathcal{C}$ such that there exist maps $\varphi_n:A_n\to A_{n+1}$ for each $n\in\NN$, i.e., we have the following diagram
\[
A_1\stackrel{\varphi_1}{\longrightarrow}A_2\stackrel{\varphi_2}{\longrightarrow}A_3\stackrel{\varphi_3}{\longrightarrow}A_4\longrightarrow\cdots
\]
We say that an objects $A$ in $\mathcal{C}$ is the \emph{direct limit} of the sequence $(A_n)_{n\in\NN}$ if for any $n\in\NN$ there exists an arrow $\psi_n:A_n\to A$ such that $\psi_{n+1}\circ\varphi_n=\psi_n$ for each $n\in\NN$, i.e., the following diagrams commute for each $n\in\NN$:
\[
\xymatrix{
A_n\ar[rd]_{\psi_n}\ar[r]^{\varphi_n}&A_{n+1}\ar[d]^{\psi_{n+1}}\\
& A
}
\]
Moreover, $A$ is \emph{universal} in the sense that if another object $B$ such that there exist arrows $\vartheta_n:A_n\to B$ such that $\vartheta_{n+1}\circ\varphi_n=\vartheta_n$ for all $n\in\NN$, then there exists a unique arrow $\alpha:A\to B$ such that $\alpha\circ\psi_n=\vartheta_n$ for each $n\in\NN$.
\end{definition}

We will use the concept of direct limits for our special category $\mathcal{C}=\textbf{\text{SimpLocFinG}}$ for the special case that the arrows $(\varphi_n)_{n\in\NN}$ are all injective, i.e., that we consider a sequence $(G_n)_{n\in\NN}$ of simple and locally finite graphs which is growing in the sense that $G_n$ is a subgraph of $G_{n+1}$ for each $n\in\NN$. As a matter of fact, each element in the sequence $(G_n)_{n\in\NN}$ is supposed to be a finite simple graph and hence locally finite. The direct limit itself does not have to be finite anymore.

\subsection{Flows in networks}
So far we considered the algebraic components of this paper. We will now turn to the analytical structure, i.e., we discuss transport processes on networks. Notice that we only treat the infinite graph case here, since the finite one is included and only needs some small modifications. Let $G=(\V(G),\E(G))$ be a simple and locally finite graph, then we study the following partial differential equation on the graph $G$ for $i\in I$, $j\in J$:
\begin{align*}\tag{PDE}\label{eqn:PDE}
\begin{cases}
\displaystyle{\frac{\partial}{\partial{t}}w_j(t,x)=c_j\frac{\partial}{\partial{x}}w_j(t,x)},&\quad x\in\left(0,1\right),\ t\geq0,\\
\displaystyle{w_j(x,0)=f_j(x)},&\quad x\in\left(0,1\right),\\
\displaystyle{\Phi^{-}_{ij}w_j(1,t)=\sum_{k\in\NN}{\Phi^{+}_{ik}w_k(0,t)}},&\quad t\geq0.
\end{cases}
\end{align*}
How this equation relates to the classical linear Boltzmann equation is described in \cite[Sect.~1]{DKNR2010}. In what follows, we assume that all velocities $c_j$, $j\in J$ on the edges stay away from zero and are bounded from above, i.e., there exist $m,M>0$ such that
\begin{align}\label{eqn:VelocitiesBound}
m\leq c_j\leq M,\quad j\in J.
\end{align}
The boundary conditions of the equation depend on the structure of the network which is introduced by the incidence matrices. Now consider the Banach space $X:=\Ell^1\left(\left[0,1\right],\ell^1\right)$ equipped with the norm given by
\[
\left\|f\right\|:=\int_0^1{\left\|f(s)\right\|_{\ell^1}\ \dd{s}}
\]
and introduce the (unbounded) operator $(A,\dom(A))$ on $X$ defined by
\begin{align}\label{eqn:DefAOp}
A:=\diag\left(\frac{\dd}{\dd{x}}\right),\quad \dom(A):=\left\{f\in\W^{1,1}\left(\left[0,1\right],\ell^1\right):\ f(1)=\BB_Cf(0)\right\},
\end{align}
where 
\begin{align}\label{eq:BC-Op}
\BB_C:=C^{-1}\BB C\quad \text{and}\quad C:=\diag\left(c_j\right).
\end{align}
Notice, that \eqref{eqn:VelocitiesBound} assures, that the operator $\BB_C$ is bounded. It is well-known that the corresponding abstract Cauchy problem given by
\begin{align*}\tag{ACP}\label{eqn:ACP}
\begin{cases}
\dot{u}(t)=Au(t),&\quad t\geq0\\
u(0)=f,
\end{cases}
\end{align*}
on the Banach space $X=\Ell^1\left(\left[0,1\right],\ell^1\right)$ is equivalent to the partial differential equation \eqref{eqn:PDE}, i.e., a solution of \eqref{eqn:ACP} gives rise to a solution of \eqref{eqn:PDE} and vice versa, cf. \cite[Prop.~3.1]{Dorn2008} and \cite{KS2005} for the finite graph case. We now need the notions of well-posedness of abstract Cauchy problems and $C_0$-semigroups, cf. \cite[Chapter II, Thm.~6.7]{EN}.

\begin{definition}
A function $u:\RR_+\to X$ is called a \emph{(classical) solution} of \eqref{eqn:ACP} if $u$ is continuously differentiable with respect to $X$, $u(t)\in\dom(A)$ for all $t\geq0$ and \eqref{eqn:ACP} holds.
\end{definition}

\begin{definition}\label{def:WP}
The abstract Cauchy problem \eqref{eqn:ACP} is called \emph{well-posed} if for every $f\in\dom(A)$, there exists a unique solution $u(\cdot,f)$ of \eqref{eqn:ACP}, $\dom(A)$ is dense in $X$ and if for every sequence $(f_n)_{n\in\NN}$ in $\dom(A)$ with $\lim_{n\to\infty}{f_n}=0$, one has $\lim_{n\to\infty}{u(t,f_n)}=0$ uniformly in compact intervals.
\end{definition}

\begin{definition}
A family of bounded linear operators $(T(t))_{t\geq0}$ is called \emph{strongly continuous one-parameter semigroup of linear operators}, or \emph{$C_0$-semigroup}, if the following properties are satisfied:
\begin{iiv}
	\item $T(t+s)=T(t)T(s)$ and $T(0)=\Id$ for all $t,s\geq0$.
	\item $\lim_{t\searrow0}{\left\|T(t)f-f\right\|}=0$ for each $f\in X$.
\end{iiv}
\end{definition}

Each $C_0$-semigroup $(T(t))_{t\geq0}$ gives rise to an operator $(A,\dom(A))$ called the generator. This operator is defined as follows.
\[
Ax:=\lim_{t\searrow0}{\frac{T(t)x-x}{t}},\quad \dom(A):=\left\{x\in X:\ \lim_{t\searrow0}{\frac{T(t)x-x}{t}}\ \text{exists}\right\}.
\]
The converse question, which operator $(A,\dom(A))$ is the generator of a $C_0$-semigroups is more involving. As a matter of fact, this question is answered by the so-called Hille--Yosida theorem, cf. \cite[Chapter II, Thm.~3.8]{EN}, \cite{Y1948}. If a given operator $(A,\dom(A))$ generates a $C_0$-semigroup $(T(t))_{t\geq0}$ on a Banach space $X$ satisfying $\left\|T(t)\right\|\leq M\ee^{\omega t}$ for some $M\geq1$, $\omega\in\RR$ and for all $t\geq0$, then we will denote this by $A\in\mathcal{G}(M,\omega,X)$. The most important fact is, that by \cite[Chapter II, Cor.~6.9]{EN} the abstract Cauchy problem \eqref{eqn:ACP} is well-posed in the sense of Definition \ref{def:WP} if and only if the operator $(A,\dom(A))$ is the generator of a $C_0$-semigroup. The following result shows, that our explicit abstract Cauchy problem \eqref{eqn:ACP} associated to \eqref{eqn:PDE} is well-posed, cf. \cite[Thm.~3.4]{Dorn2008}. For the finite network case, we refer to \cite[Prop.~2.5]{KS2005}.

\begin{theorem}\label{thm:WellPosedTranProb}
The operator $(A,\dom(A))$ on $X=\Ell^1\left(\left[0,1\right],\ell^1\right)$ defined by \eqref{eqn:DefAOp}, generates a $C_0$-semigroup. Therefore, \eqref{eqn:ACP} is well-posed.
\end{theorem}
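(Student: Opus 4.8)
Since the equivalence between generation of a $C_0$-semigroup and well-posedness of \eqref{eqn:ACP} has already been recorded above via \cite[Chapter II, Cor.~6.9]{EN}, the second assertion is immediate from the first, so the whole task is to show that $(A,\dom(A))$ is a generator. The structural input I would exploit throughout is that, thanks to the velocity bounds \eqref{eqn:VelocitiesBound} together with the uniform local finiteness of the graph, the coupling operator $\BB_C=C^{-1}\BB C$ is a \emph{bounded} operator on $\ell^1$: the matrix $\BB$ has uniformly bounded column sums (the out-degrees of the heads of the edges), and $C,C^{-1}$ are bounded and boundedly invertible.

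My preferred route is to write the semigroup down explicitly by the method of characteristics and then verify that it is a $C_0$-semigroup whose generator is exactly $(A,\dom(A))$. Since $Af=f'$ (the velocities having been absorbed into the boundary operator), the uncoupled flow is left translation $f\mapsto f(\cdot+t)$, and mass leaving the edges at the head $x=0$ is fed back in at the tail $x=1$ through $\BB_C$. Concretely I would set
\[
(T(t)f)(x):=\BB_C^{\,\lfloor t+x\rfloor}\,f\bigl(t+x-\lfloor t+x\rfloor\bigr),\qquad x\in[0,1],\ t\geq 0,
\]
and check three things: the semigroup law $T(t+s)=T(t)T(s)$, which reduces to the additivity $\lfloor t+x\rfloor+\lfloor s+(t+x-\lfloor t+x\rfloor)\rfloor=\lfloor t+s+x\rfloor$ of the floor together with the matching of fractional parts; strong continuity; and the identification of the generator. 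Because $\lfloor t+x\rfloor\le t+1$, the formula immediately yields an exponential bound on $\|T(t)\|$, placing $A$ in some $\mathcal{G}(M,\omega,X)$.

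The delicate point -- and the step I expect to be the main obstacle -- is the precise identification of the generator, in particular that its domain is \emph{exactly} $\dom(A)$. The boundary condition $f(1)=\BB_C f(0)$ is precisely the compatibility that makes the map $x\mapsto\BB_C^{\lfloor t+x\rfloor}f(t+x-\lfloor t+x\rfloor)$ free of a jump at the reflection points $t+x\in\NN$, hence differentiable with $\Ell^1$-derivative equal to $f'$; conversely, membership in the generator domain forces this same compatibility. Making this rigorous requires a careful $\Ell^1$-estimate of the difference quotients near the gluing points, using density of smooth functions satisfying the boundary condition to control strong continuity and the closedness of $\tfrac{\dd}{\dd x}$ on $\W^{1,1}$ to pin down the domain.

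As an independent check, or as an alternative that avoids the characteristic formula, I would solve the resolvent equation $(\lambda-A)f=g$ directly. On $[0,1]$ the solution is $f(x)=\ee^{\lambda x}f(0)-\int_0^x\ee^{\lambda(x-s)}g(s)\,\dd s$, and imposing $f(1)=\BB_C f(0)$ gives $(\ee^{\lambda}-\BB_C)f(0)=\int_0^1\ee^{\lambda(1-s)}g(s)\,\dd s$. For $\mathrm{Re}\,\lambda$ large enough that $\ee^{\mathrm{Re}\,\lambda}>\|\BB_C\|$, the operator $\ee^{\lambda}-\BB_C$ is boundedly invertible on $\ell^1$ by a Neumann series, so $f(0)$ and hence $R(\lambda,A)$ are given explicitly; density of $\dom(A)$ follows since it contains all smooth functions vanishing near the endpoints. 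The remaining work along this route is the Hille--Yosida estimate on the powers of the resolvent, which is exactly the part the explicit semigroup formula sidesteps, and this is why I would favour the characteristic construction.
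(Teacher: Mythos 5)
A preliminary remark: the paper itself contains no proof of Theorem \ref{thm:WellPosedTranProb}; it is quoted from \cite[Thm.~3.4]{Dorn2008} (infinite networks) and \cite[Prop.~2.5]{KS2005} (finite networks), and those results cover arbitrary velocities subject only to \eqref{eqn:VelocitiesBound}. Measured against that scope, your main route has a genuine gap, located exactly in the parenthetical ``the velocities having been absorbed into the boundary operator''. The operator the theorem concerns must carry the velocities in the differential expression, $A=\diag\left(c_j\frac{\dd}{\dd x}\right)$: this is Dorn's operator, it is what makes \eqref{eqn:ACP} equivalent to \eqref{eqn:PDE}, and it is the operator whose resolvent (with $E_\lambda(s)=\diag(\ee^{(\lambda/c_j)s})$ and the factor $C^{-1}$ in $R_\lambda$) is used later in the paper; the missing $c_j$ in \eqref{eqn:DefAOp} is evidently a typo, not an invitation to take unit speeds. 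The velocities cannot be pushed into $\BB_C$: the substitution that removes $c_j$ from $\frac{\dd}{\dd x}$ rescales the edge to length $c_j$, so on a graph in which every edge is a copy of $\left[0,1\right]$ no such reduction exists unless the $c_j$ all coincide (or are at least commensurable, after subdividing edges). For genuinely different speeds the traversal times $1/c_j$ differ from edge to edge, the wrap-arounds at the vertices are unsynchronized, and no formula of the type ``power of $\BB_C$ composed with translation'' can represent the semigroup; note that the paper's Example section assumes $c_j\equiv1$ precisely in order to use the formula you propose. So the characteristic construction, which is correct and standard in the unit-speed case, proves the theorem only in that special case.

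Your fallback resolvent route is the one that does extend to general velocities and is essentially how the cited references proceed: solve $(\lambda-A)f=g$ edgewise (which for the correct operator produces $\diag(\ee^{(\lambda/c_j)x})$ rather than the scalar $\ee^{\lambda x}$), invert $\Id-\BB_{C,\lambda}$ by a Neumann series for $\mathrm{Re}\,\lambda$ large, and observe that $\dom(A)$ is dense. But you explicitly leave open the decisive step --- the Hille--Yosida estimates $\left\|R(\lambda,A)^k\right\|\leq M(\lambda-\omega)^{-k}$ for all $k\in\NN$ and large real $\lambda$ --- and with them the existence of the semigroup; this is where the entire content of the theorem sits, so the argument as proposed does not yet prove anything in the general case. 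Nor is that step closed by brute force on the explicit formula: the known proofs use additional structure, either positivity (since $\BB\geq0$ entrywise, the explicit formula shows $R(\lambda,A)\geq0$ for large real $\lambda$, and densely defined resolvent positive operators on an AL-space such as $X=\Ell^1\left(\left[0,1\right],\ell^1\right)$ are generators of positive $C_0$-semigroups, by a theorem of Arendt), or quasi-dissipativity with respect to a suitable equivalent norm followed by the Lumer--Phillips theorem. In summary: your proposal establishes the constant-velocity case, and for the velocities actually allowed by \eqref{eqn:VelocitiesBound} it stops exactly where the real work begins.
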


\section{Approximation of flows on direct limit graphs}\label{sec:AppFlowDirLim}

We now consider the situation as described earlier. Let $(G_n=(\V(G_n),\E(G_n)))_{n\in\NN}$ be a growing sequence of finite simple graphs, i.e., there exist injective graph homomorphisms $\varphi_n:G_n\to G_{n+1}$ for each $n\in\NN$. Notice that by \cite[Def.~8.1]{P1995} the limit of such a sequence $(G_n)_{n\in\NN}$ exists. Let us denote this limit by $G=(\V(G),\E(G))$. In particular, one has $G=\bigcup_{n\in\NN}{G_n}$. For each $n\in\NN$ we have a strongly continuous semigroup $(T_n(t))_{t\geq0}$ solving \eqref{eqn:ACP} on the space $\Ell^1\left(\left[0,1\right],\CC\right)^{\left|\E(G_n)\right|}$. Moreover, we have a $C_0$-semigroup $(T(t))_{t\geq0}$ on $\Ell^1\left(\left[0,1\right],\ell^1\right)$. The clue is, that the semigroups $(T_n(t))_{t\geq0}$ approximate the semigroup $(T(t))_{t\geq0}$ is a certain sense. To make this more precise, we refer to the work of K.~Ito and F.~Kappel \cite{TK1998}. In fact, we will use the following theorem.

\begin{theorem}{\cite[Thm.~2.1]{TK1998}}\label{thm:TK}
Let $X$ and $X_n$, $n\in\NN$, be Banach spaces and such that for each $n\in\NN$ there exist bounded linear operators $P_n:X\to X_n$ and $E_n:X_n\to X$ such that $\sup_{n\in\NN}{\left\|P_n\right\|}<\infty$, $\sup_{n\in\NN}{\left\|E_n\right\|}<\infty$ and $P_nE_n=\Id_n$, where $\Id_n$ denotes the identity operator on $X_n$, $n\in\NN$. Let $A\in\mathcal{G}(M,\omega,X)$ and $A_n\in\mathcal{G}(M,\omega,X_n)$ for each $n\in\NN$ and let $(T(t))_{t\geq0}$ and $(T_n(t))_{t\geq0}$ be the semigroups generated by $A$ and $A_n$ on $X$ and $X_n$, respectively. Then the following statements are equivalent.
\begin{abc}
	\item There exists $\lambda_0\in\rho(A)\cap\bigcap_{n\in\NN}{\rho(A_n)}$, such that for all $x\in X$,
	\[
	\lim_{n\to\infty}{\left\|E_nR(\lambda_0,A_n)P_nx-R(\lambda_0,A)x\right\|}=0.
	\]
	\item For every $x\in X$ and $t\geq0$
	\[
	\lim_{n\to\infty}{\left\|E_nT_n(t)P_nx-T(t)x\right\|}=0
	\]
	uniformly on bounded $t$-intervals.
\end{abc}
\end{theorem}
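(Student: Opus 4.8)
The plan is to prove the two implications separately, following the template of the first Trotter--Kato theorem (cf. \cite[Chapter III, Thm.~4.8]{EN}) but exploiting the relation $P_nE_n=\Id_n$ to pass freely between $X$ and $X_n$. The direction (b)$\Rightarrow$(a) is the soft one. I would fix any real $\lambda_0>\omega$; then $\lambda_0\in\rho(A)\cap\bigcap_n\rho(A_n)$ and the Hille--Yosida estimate yields the Laplace representations $R(\lambda_0,A)x=\int_0^\infty \ee^{-\lambda_0 t}T(t)x\,\dd t$ and $R(\lambda_0,A_n)y=\int_0^\infty \ee^{-\lambda_0 t}T_n(t)y\,\dd t$. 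Applying $E_n$, inserting $P_nx$, and subtracting gives
\[
\bigl\|E_nR(\lambda_0,A_n)P_nx-R(\lambda_0,A)x\bigr\|\le\int_0^\infty \ee^{-\lambda_0 t}\bigl\|E_nT_n(t)P_nx-T(t)x\bigr\|\,\dd t .
\]
The integrand tends to $0$ pointwise by (b) and is dominated by $\ee^{-\lambda_0 t}\bigl(M\|E_n\|\,\|P_n\|+M\bigr)\ee^{\omega t}\|x\|$, which is integrable and uniformly bounded in $n$ since $\lambda_0>\omega$ and $\sup_n\|E_n\|,\sup_n\|P_n\|<\infty$; dominated convergence finishes this direction.

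For (a)$\Rightarrow$(b) the algebraic key is that $P_nE_n=\Id_n$ forces
\[
\bigl(E_nR(\lambda,A_n)P_n\bigr)^k=E_nR(\lambda,A_n)^kP_n\qquad(k\in\NN),
\]
so that powers of the ``transported'' resolvents are transported powers. First, the uniform boundedness principle applied to (a) gives $\sup_n\|E_nR(\lambda_0,A_n)P_n\|<\infty$, and writing $R(\lambda_0,A_n)=P_n\bigl(E_nR(\lambda_0,A_n)P_n\bigr)E_n$ yields $\sup_n\|R(\lambda_0,A_n)\|<\infty$. Since a fixed power of a uniformly bounded, strongly convergent sequence of operators again converges strongly, the displayed identity upgrades (a) to $E_nR(\lambda_0,A_n)^kP_nx\to R(\lambda_0,A)^kx$ for every $k$ and $x$. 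Expanding $R(\lambda,A_n)$ in its Neumann series about $\lambda_0$, whose tail is uniformly small in $n$ by the uniform resolvent bound, and interchanging the (uniform) sum with the limit, I would propagate $E_nR(\lambda,A_n)P_nx\to R(\lambda,A)x$ to all $\lambda$ with $\operatorname{Re}\lambda>\omega$; in particular this holds at the points $\lambda=k/t$ needed below.

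For the passage to the semigroups I would reduce, by density of $\dom(A)=R(\lambda_0,A)X$ and the uniform bound $\sup_n\sup_{t\le t_0}\|E_nT_n(t)P_n\|<\infty$, to vectors $x=R(\lambda_0,A)z$. Setting $w_n:=R(\lambda_0,A_n)P_nz\in\dom(A_n)$, the uniform resolvent bound gives $\sup_n\|w_n\|<\infty$ and $\sup_n\|A_nw_n\|=\sup_n\|\lambda_0 w_n-P_nz\|<\infty$, while $E_nw_n\to x$ and $\|w_n-P_nx\|=\|P_n(E_nw_n-x)\|\to0$. Writing $S_n(k,t):=\bigl(\tfrac{k}{t}R(\tfrac{k}{t},A_n)\bigr)^k$ and $S(k,t)$ analogously with $A$, I would split
\[
\|E_nT_n(t)w_n-T(t)x\|\le\|E_n(T_n(t)-S_n(k,t))w_n\|+\|E_nS_n(k,t)w_n-S(k,t)x\|+\|(S(k,t)-T(t))x\|.
\]
The first term is controlled uniformly in $n$ and in $t\le t_0$ by the standard error estimate of the exponential (Euler) formula, whose constant depends only on the common data $M,\omega,t_0$ and on $\|A_nw_n\|$, hence is $O(k^{-1/2})$ uniformly; the third term is $O(k^{-1/2})$ by the same formula applied to $T$ and $x\in\dom(A)$. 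For fixed $k$ the middle term tends to $0$ as $n\to\infty$, since $E_nS_n(k,t)w_n=(\tfrac{k}{t})^k\bigl(E_nR(\tfrac{k}{t},A_n)P_n\bigr)^kE_nw_n\to(\tfrac{k}{t})^kR(\tfrac{k}{t},A)^kx=S(k,t)x$ by the resolvent-power convergence together with $E_nw_n\to x$. Choosing $k$ large and then $n$ large yields (b), uniformly on compact $t$-intervals (the value at $t=0$, i.e.\ $E_nP_nx\to x$, follows from the same reduction).

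The main obstacle is precisely this last step: it is a genuine double limit in $k$ and $n$, and the only way to control it is to make the error in the exponential formula uniform in $n$ \emph{before} sending $n\to\infty$. This is exactly what the reduction to $x=R(\lambda_0,A)z$ buys, since it produces approximants $w_n$ with $\sup_n\|A_nw_n\|<\infty$; combined with the fact that all $A_n$ share the constants $M,\omega$, the $n$-independent rate in the Euler formula applies and the two outer terms become uniformly small, after which the middle term is killed by the resolvent convergence established in the second paragraph.
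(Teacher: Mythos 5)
The paper itself contains no proof of Theorem \ref{thm:TK}: it is quoted verbatim from Ito--Kappel \cite{TK1998}, so your attempt can only be compared with the classical argument. Your direction (b)$\Rightarrow$(a) is exactly the standard Laplace-transform plus dominated-convergence proof and is correct. Your direction (a)$\Rightarrow$(b), however, takes a genuinely different route from the classical one, and it has a gap at precisely the point you yourself identify as the main obstacle. The classical proof (cf. \cite[Chapter III, Thm.~4.8]{EN} and \cite{TK1998}) never propagates resolvent convergence to other $\lambda$ and never needs convergence rates: it fixes the single $\lambda_0$ from (a), reduces to vectors $x=R(\lambda_0,A)^2z$, and integrates the derivative of
\[
s\mapsto E_nT_n(t-s)R(\lambda_0,A_n)P_nT(s)R(\lambda_0,A)z,
\]
so that semigroup convergence follows from resolvent convergence at $\lambda_0$ alone, the only analytic input being that strong convergence of uniformly bounded operators is uniform on the compact orbits $\left\{T(s)y:\ 0\leq s\leq t\right\}$. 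Your Euler-formula route instead hinges on the asserted ``standard error estimate'' $\left\|T_n(t)w_n-S_n(k,t)w_n\right\|\leq C(M,\omega,t_0)\,k^{-1/2}\left\|A_nw_n\right\|$ for $w_n\in\dom(A_n)$. This is \emph{not} the standard estimate of linear semigroup theory: the textbook exponential formula gives convergence on $\dom(A_n)$ \emph{without} any rate, and a rate $O(1/k)$ only on $\dom(A_n^2)$. Chernoff's square-root lemma does not produce it either, because it leaves the Yosida-approximation error $\int_0^t\left\|\left(\mu R(\mu,A_n)-\Id_n\right)T_n(s)A_nw_n\right\|\dd{s}$, which has no rate that is uniform in $n$ (the vectors $A_nw_n$ move with $n$, so pointwise strong convergence $\mu R(\mu,A_n)\to\Id_n$ gives nothing uniform). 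The estimate you need is true --- it is the Crandall--Liggett estimate, valid for quasi-contractive semigroups and extendable to $\mathcal{G}(M,\omega,X_n)$ by an equivalent renorming whose constants depend only on $M$ --- but it is a nontrivial theorem that must be invoked or proved; as a bare appeal to a ``standard'' fact, the double limit in $(k,n)$ does not close.

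Three further remarks. First, the simplest repair staying inside your own architecture is to reduce to $x=R(\lambda_0,A)^2z$ and $w_n:=R(\lambda_0,A_n)^2P_nz$: the same algebra you already use gives $\sup_n\left\|A_n^2w_n\right\|<\infty$, and the elementary $O(1/k)$ Euler estimate on $\dom(A_n^2)$, whose constant depends only on $M,\omega,t_0$, makes your first term uniformly small without Crandall--Liggett. Second, your claim that for fixed $k$ the middle term tends to $0$ uniformly for $t\leq t_0$ is not justified: $\lambda=k/t$ runs through the unbounded set $\left[k/t_0,\infty\right)$ as $t$ ranges over $\left(0,t_0\right]$, while your Neumann-series argument yields resolvent convergence only locally uniformly in $\lambda$; fix this by treating $t\in\left[0,\delta\right]$ separately via $\left\|(T_n(t)-\Id_n)w_n\right\|\leq tM\ee^{\left|\omega\right|t}\left\|A_nw_n\right\|$, which is small uniformly in $n$, and keeping $k/t$ in a compact set for $t\in\left[\delta,t_0\right]$. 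Third, your propagation of (a) to the half-plane $\left\{\mathrm{Re}(\lambda)>\omega\right\}$ needs $\mathrm{Re}(\lambda_0)>\omega$ (or at least that the initial disk of convergence meets that half-plane, followed by a connectedness argument with the uniform Hille--Yosida bounds); this is what Ito--Kappel actually assume, but it is not literally guaranteed by the hypothesis $\lambda_0\in\rho(A)\cap\bigcap_{n\in\NN}\rho(A_n)$ as restated here, so the step deserves an explicit flag.
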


In order to apply this theorem we have to specify all required data for our situation. The choices for the Banach spaces are clear, i.e., one chooses 
\[
X_n:=\Ell^1\left(\left[0,1\right],\CC\right)^{\left|\E(G_n)\right|}=\Ell^1\left(\left[0,1\right],\CC^{\left|\E(G_n)\right|}\right),\quad n\in\NN,
\]
and
\[
X:=\Ell^1\left(\left[0,1\right],\ell^1\right).
\]
Now we have to specify what the operators $(P_n)_{n\in\NN}$ and $(E_n)_{n\in\NN}$ have to be in our case. Since $G=(\V(G),\E(G))$ is the direct limit of the sequence $(G_n)_{n\in\NN}$ there exist graph homomorphisms $\psi_n:G_n\to G$, $n\in\NN$, which by \cite[Prop.~8.3]{P1995} are injective, too. These maps yield maps $E_n:X_n\to X$ for $n\in\NN$. Actually, the operator $E_n$ intuitively extend the functions on $G$ by infinitely many zeros. Without loss of generality, we may assume that the labeling of the edges of $G_n$ and $G$ coincide on $\E(G)\setminus\E(G_n)$. To be more detailed, the operator $E_n$ has the following action
\[
E_n(f_1,f_2,\ldots,f_{\left|E(G_n)\right|})=(f_1,f_2,\ldots,f_{\left|\E(G_n)\right|},0,0,0\ldots).
\] 
The operators $P_n:X\to X_n$, $n\in\NN$, are just the restrictions to the smaller subspace, i.e., it is a cut-off operator. Again, by assuming that labeling of the edges of $G_n$ and $G$ coincide on $\E(G)\setminus\E(G_n)$ we can describe $P_n$ as follows
\[
P_n(f_1,f_2,f_3,\ldots)=(f_1,f_2,\ldots,f_{\left|\E(G_n)\right|})
\]
By construction it is clear that $\left\|P_n\right\|\leq1$ and $\left\|E_n\right\|\leq1$ for each $n\in\NN$.

\begin{remark}\label{rem:FunLim}
The sequence of (injective) graph homomorphisms $(\varphi_n)_{n\in\NN}$ corresponding to the sequence $(G_n)_{n\in\NN}$ of graphs extends, similar to the maps $(\psi_n)_{n\in\NN}$, the a sequence of $(\Phi_n)_{n\in\NN}$ of linear maps $\Phi_n:X_n\to X_{n+1}$. It is easy to verify that $\Ell^1\left(\left[0,1\right],\ell^1\right)$ is the direct limit of the Banach spaces $(X_n)_{n\in\NN}$ in the category of Banach spaces with contractions as morphisms. More details regarding functors and categories of Banach spaces can for example been found in the monograph by P.W.~Michor \cite{M1978}. 
\end{remark}

Let us set the observation of Remark \ref{rem:FunLim} in a bigger picture by means of category theory. For that reason, we recall the following definition, cf. \cite[Chapter I, Sect.~3]{CatML} or \cite[Def.~1.2]{CatA}.

\begin{definition}
Let $\mathcal{C}_1$ and $\mathcal{C}_2$ be two categories. A \emph{functor} $F:\mathcal{C}_1\to\mathcal{C}_2$ between the categories $\mathcal{C}_1$ and $\mathcal{C}_2$ is a mapping of objects to objects and arrows to arrows, such that
\begin{iiv}
	\item $F(f:A\to B)=F(f):F(A)\to F(B)$,
	\item $F(1_A)=1_{F(A)}$,
	\item $F(f\circ g)=F(f)\circ F(g)$.
\end{iiv}
\end{definition}

In other words, a functor $F$ preserves domains and codomains, identity arrows, and composition. Now let us apply the concept of functors for our situation. In particular, let us denote the category of Banach spaces together with linear bounded operator between them as arrows by $\mathcal{B}$. Then there exists a functor $F:\mathcal{C}\to\mathcal{B}$ by $F(G=(\V(G),\E(G)))=\Ell^1\left(\left[0,1\right],\CC\right)^{\left|\E(G)\right|}$ if $G$ is finite and $F(G=(\V(G),\E(G)))=\Ell^1\left(\left[0,1\right],\ell^1\right)$ if $G$ is infinite. Moreover, for the graph homomorphism $\varphi_n:G_n\to G_{n+1}$ one defines $F(\varphi_n)=\Phi_n$.

\medskip
Let us now come back to our transport problem. By the previous section, we know that the operator $A$ and $A_n$ are in fact generators of $C_0$-semigroups $(T(t))_{t\geq0}$ and $(T_n(t))_{t\geq0}$, respectively. The following result shows, that the semigroups $(T_n(t))_{t\geq0}$ \glqq \emph{converge}\grqq{} to $(T(t))_{t\geq0}$ in the sense of Theorem \ref{thm:TK}$\mathrm{(b)}$. 

\begin{proposition}
Let $(G_n)_{n\in\NN}$ be a increasing sequence of graphs with limit $G$. By $((A_n\dom(A_n))_{n\in\NN}$ and $(A,\dom(A))$ we denote the operators defined by \eqref{eqn:DefAOp} associated to the transport problems on $X_n$ and $X$, respectively. Then, for every $x\in X$ and $t\geq0$ one has that ${\left\|E_nT_n(t)P_nx-T(t)x\right\|}\to0$ for $n\to\infty$ uniformly on bounded $t$-intervals. Intuitively spoken, the semigroups $(T_n(t))_{t\geq0}$, $n\in\NN$, approximate $(T(t))_{t\geq0}$ along the growing sequence of graphs.
\end{proposition}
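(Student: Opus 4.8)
The plan is to deduce the statement directly from Theorem~\ref{thm:TK}, applied to the spaces $X_n$, $X$ and the operators $P_n$, $E_n$ introduced above. Since $\|P_n\|\le 1$, $\|E_n\|\le 1$ and $P_nE_n=\Id_n$ have already been recorded, it remains to supply the two ingredients the theorem demands, namely uniform generation constants and the resolvent convergence in part~(a). Once condition~(a) is verified, its conclusion~(b) is precisely the assertion of the proposition.

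First I would pin down the generation constants. By Theorem~\ref{thm:WellPosedTranProb} each $A_n$ and $A$ generates a $C_0$-semigroup, and I claim these generators all lie in a common class $\mathcal{G}(\widetilde M,\omega,\cdot)$. The key observation is that the finite boundary operator is the compression $\BB_C^{(n)}=P_n\BB_C E_n$ of $\BB_C$: for edges $\ee_i,\ee_j\in\E(G_n)$ the line-graph adjacency is determined locally and is inherited from $G$, so $\BB_{ij}^{(n)}=\BB_{ij}$, while the velocities on shared edges coincide. Consequently $\|\BB_C^{(n)}\|\le\|\BB_C\|$ for every $n$, and every quantitative bound arising in the construction of Theorem~\ref{thm:WellPosedTranProb} depends only on the velocity bounds $m,M$ from \eqref{eqn:VelocitiesBound} and on $\|\BB_C\|$. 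Hence $\widetilde M$ and $\omega$ may be chosen independently of $n$, as Theorem~\ref{thm:TK} requires.

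Next I would write the resolvent explicitly and isolate the part that couples the edges. Solving $\lambda_0 f-f'=g$ edgewise under the boundary condition $f(1)=\BB_C f(0)$ gives, for real $\lambda_0$ large enough that $e^{\lambda_0}>\|\BB_C\|$,
\[
R(\lambda_0,A)g(x)=e^{\lambda_0 x}\bigl(e^{\lambda_0}\Id-\BB_C\bigr)^{-1}\!\int_0^1 e^{\lambda_0(1-s)}g(s)\,\dd s-\int_0^x e^{\lambda_0(x-s)}g(s)\,\dd s,
\]
and the identical formula with $\BB_C$ replaced by $\BB_C^{(n)}$ and $\Id$ by $\Id_n$ for $R(\lambda_0,A_n)$. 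Since $e^{\lambda_0}>\|\BB_C\|\ge\|\BB_C^{(n)}\|$, this $\lambda_0$ lies in $\rho(A)\cap\bigcap_{n}\rho(A_n)$. The second (Volterra) term acts edgewise and therefore commutes with the coordinate projection $\pi_n:=E_nP_n$; using $\pi_n\to\Id$ strongly on $X$ — which holds because $G=\bigcup_n G_n$, so every edge eventually belongs to $G_n$ — its contribution to $E_nR(\lambda_0,A_n)P_nx$ converges to that of $R(\lambda_0,A)x$. Thus everything reduces to the boundary term, i.e.\ to showing that $E_n\bigl(e^{\lambda_0}\Id-\BB_C^{(n)}\bigr)^{-1}P_n\to\bigl(e^{\lambda_0}\Id-\BB_C\bigr)^{-1}$ strongly on $\ell^1$.

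The heart of the argument — and the step I expect to be the main obstacle — is this last strong convergence, because the finite boundary condition genuinely differs from the infinite one: in $G$ an edge of $G_n$ may receive flow from edges not yet present in $G_n$, and the compression $\BB_C^{(n)}$ discards exactly those couplings. To handle it I would use the Neumann series $\bigl(e^{\lambda_0}\Id-\BB_C\bigr)^{-1}=\sum_{k\ge0}e^{-(k+1)\lambda_0}\BB_C^{k}$, which converges in operator norm uniformly for $e^{\lambda_0}>\|\BB_C\|$, together with the identity $E_n(\BB_C^{(n)})^kP_n=\pi_n(\BB_C\pi_n)^k$. Each term $\pi_n(\BB_C\pi_n)^k\to\BB_C^k$ strongly, being a product of uniformly bounded, strongly convergent operators, and the uniform summability of the series (dominated by $\sum_k e^{-(k+1)\lambda_0}\|\BB_C\|^k$) lets one pass the limit through the sum, yielding the claimed strong convergence. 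Assembling the Volterra and boundary parts then verifies condition~(a) of Theorem~\ref{thm:TK}, whose conclusion~(b) is exactly the statement to be proved.
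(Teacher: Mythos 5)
Your proposal is correct and takes essentially the same route as the paper: both invoke Theorem~\ref{thm:TK} and verify its condition (a) by writing down the explicit resolvent of the flow generator and letting $n\to\infty$. The only difference is one of detail, in your favour: where the paper cites \cite[Prop.~18.12]{Positive2017} and declares the convergence $\left\|E_nR(\lambda_0,A_n)P_nx-R(\lambda_0,A)x\right\|\to0$ to be ``clear'', you actually carry it out (via the compression identity $\BB_C^{(n)}=P_n\BB_C E_n$, the Volterra/boundary splitting, and the dominated Neumann-series argument), and you also make explicit the uniform $\mathcal{G}(M,\omega)$ bounds that Theorem~\ref{thm:TK} tacitly requires but the paper never checks.
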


\begin{proof}
In order to prove the result, we make use of Theorem \ref{thm:TK}. By \cite[Prop.~18.12]{Positive2017} one has that $\lambda\in\rho(A)$ if $\mathrm{Re}(\lambda)>0$ and for such a $\lambda\in\rho(A)$ one has
\[
R(\lambda,A)=\left(\Id+E_{\lambda}(\cdot)(1-\BB_{C,\lambda})^{-1}\BB_{C,\lambda}\otimes\delta_0\right)R_{\lambda},
\]
where $\delta_0$ denotes the point evaluation at $0$, $E_\lambda(s):=\diag\left(\ee^{(\lambda/c_j)s}\right)$, $\BB_{C,\lambda}:=E_{\lambda}(-1)\BB_C$, see also \eqref{eq:BC-Op}, and
\[
(R_\lambda f)(s):=\int_s^1{E_{\lambda}(s-t)C^{-1}f(t)\ \dd{t}}.
\]
Hence, it is clear that $\rho(A)\cap\bigcup_{n\in\NN}{\rho(A_n)}\neq\varnothing$. By the explicit description of the operator $(E_n)_{n\in\NN}$, $(P_n)_{n\in\NN}$ and the resolvents it is clear that ${\left\|E_nR(\lambda_0,A_n)P_nx-R(\lambda_0,A)x\right\|}\to0$ for $n\to\infty$. Therefore, by Theorem \ref{thm:TK} we conclude that for every $x\in X$ and $t\geq0$ one has that ${\left\|E_nT_n(t)P_nx-T(t)x\right\|}\to0$ for $n\to\infty$ uniformly on bounded $t$-intervals.
\end{proof}

\begin{remark}
We mentioned in the introduction, that the theory for flows in networks has been generalized by M.~Kramar Fijav\v{z} and the author in \cite{BK2019} to a bigger class of operator semigroups on the phase space $\Ell^{\infty}\left(\left[0,1\right],\ell^1\right)$, the so-called bi-continuous semigroups. These objects have a rich structure and have been introduced by F.~K\"uhnemund \cite{KuPhD} and further developed by B.~Farkas and the author \cite{BF,BF2,BPos}. We will not go into the details of this theory since this is not the topic of this paper. Nevertheless, it is worth to mention that even in the case of bi-continuous semigroups, there are Trotter--Kato approximation theorems \cite{AK2002,AM2004} in the spirit of \cite[Chapter III, Thm.~4.8 \& 4.9]{EN}. Moreover, there is a recent paper on the first Trotter--Kato theorem which is closely related to the work of Ito and Kappel, cf. \cite{TKBi2019}. We would like to notice, that even if there are these approximation theorems for bi-continuous semigroups, the procedure described above is not imitable. This is due to the fact, that one has to assume that the network is finite if one allows velocities on the edges of the network which are not rational (and linear dependent). Unfortunately, this is due to the absence of a Lumer--Phillips type generation theorem for bi-continuous semigroups. However, if one assumes that even the direct limit is finite, then the procedure from above just works out.
\end{remark}

\section{A second Trotter--Kato type theorem}
The assumption in Theorem \ref{thm:TK} is that we know that there exists a $C_0$-semigroup on the spaces $X$ and in some sense we know how to approximate them by means of resolvents. However, another important question is, if there exists a $C_0$-semigroup as a limit on $X$ if I only know that there exists a sequence of semigroups on the spaces $X_n$. The following theorem is related to the second Trotter--Kato theorem on a single Banach space \cite[Chapter III, Thm.~4.9]{EN}. We now formulate this theorem such that it fits into the framework of K.~Ito and F.~Kappel.

\begin{theorem}\label{thm:TK2}
Let $X$ and $X_n$, $n\in\NN$, be Banach spaces and such that for each $n\in\NN$ there exist bounded linear operators $P_n:X\to X_n$ and $E_n:X_n\to X$ such that $\sup_{n\in\NN}{\left\|P_n\right\|}<\infty$, $\sup_{n\in\NN}{\left\|E_n\right\|}<\infty$ and $P_nE_n=\Id_n$, where $\Id_n$ the the identity operator on $X_n$, $n\in\NN$. Let $A_n\in\mathcal{G}(M,\omega,X_n)$ for each $n\in\NN$ and let $(T_n(t))_{t\geq0}$, $n\in\NN$, be the semigroups generated by $A_n$ on $X_n$. Then the following statements are equivalent.
\begin{abc}
	\item There exists $\lambda_0\in\rho(A)\cap\bigcap_{n\in\NN}{\rho(A_n)}$ and a bounded operator $R\in\LLL(X)$ with dense range, such that for all $x\in X$,
	\[
	\lim_{n\to\infty}{\left\|E_nR(\lambda_0,A_n)P_nx-Rx\right\|}=0.
	\]
	\item There exists a strongly continuous semigroup $(T(t))_{t\geq0}$ on $X$ such that for every $x\in X$ and $t\geq0$
	\[
	\lim_{n\to\infty}{\left\|E_nT_n(t)P_nx-T(t)x\right\|}=0
	\]
	uniformly on bounded $t$-intervals.
\end{abc}
\end{theorem}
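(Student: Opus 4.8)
The plan is to adapt the classical proof of the second Trotter--Kato theorem (as in \cite[Chapter III, Thm.~4.9]{EN}) to the approximation framework of Ito and Kappel, where the spaces $X_n$ are linked to $X$ only through the operators $P_n$ and $E_n$ rather than by inclusions. The implication $\mathrm{(b)}\Rightarrow\mathrm{(a)}$ should be the easy direction: assuming convergence of the semigroups, one integrates. Since $A_n\in\mathcal{G}(M,\omega,X_n)$ uniformly in $n$, the resolvents admit the Laplace integral representation $R(\lambda_0,A_n)x_n=\int_0^\infty \ee^{-\lambda_0 t}T_n(t)x_n\,\dd{t}$ for $\mathrm{Re}(\lambda_0)>\omega$. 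The uniform bound $\left\|T_n(t)\right\|\leq M\ee^{\omega t}$ together with $\sup_n\left\|P_n\right\|,\sup_n\left\|E_n\right\|<\infty$ lets me pass the limit through the integral by dominated convergence, so that $E_nR(\lambda_0,A_n)P_nx\to\int_0^\infty \ee^{-\lambda_0 t}T(t)x\,\dd{t}=:Rx$. The resulting $R$ is bounded, and since $(T(t))_{t\geq0}$ is a strongly continuous semigroup, $R=R(\lambda_0,B)$ for its generator $B$, hence $R$ has dense range (equal to $\dom(B)$), and one sets $A:=B$ to furnish the required $\lambda_0\in\rho(A)$.

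For the substantive direction $\mathrm{(a)}\Rightarrow\mathrm{(b)}$, I would first show that the approximate resolvents $R_n:=E_nR(\lambda_0,A_n)P_n$ converge strongly to a bounded operator $R$ with dense range, and then identify $R$ as a genuine resolvent $R(\lambda_0,A)$ of a generator $A\in\mathcal{G}(M,\omega,X)$; once that is done, Theorem~\ref{thm:TK} applies verbatim and delivers $\mathrm{(b)}$. The key algebraic fact is the resolvent identity on each $X_n$, which yields $E_nR(\mu,A_n)P_n$ in terms of $E_nR(\lambda_0,A_n)P_n$ via a Neumann-type series; using $P_nE_n=\Id_n$ one checks that the limit operators $R(\mu)$ satisfy the resolvent equation $R(\lambda_0)-R(\mu)=(\mu-\lambda_0)R(\lambda_0)R(\mu)$ for $\mu$ in a neighbourhood of $\lambda_0$. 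This forces the family $\{R(\mu)\}$ to be a pseudo-resolvent. The uniform Hille--Yosida bounds $\left\|R(\lambda_0,A_n)^k\right\|\leq M/(\mathrm{Re}(\lambda_0)-\omega)^k$ survive the limit, so $R$ inherits the growth estimates needed for the generation theorem.

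The main obstacle, and the heart of the argument, is upgrading the pseudo-resolvent $R$ to an actual resolvent, i.e.\ proving that $R$ is injective with $\ker(R)=\{0\}$ so that it is of the form $R(\lambda_0,A)$ for a densely defined operator $A$. This is precisely where the hypothesis that $R$ has dense range is indispensable: by the standard theory of pseudo-resolvents \cite[Chapter III, Sect.~4]{EN}, a pseudo-resolvent is the resolvent of a (single-valued, densely defined) operator if and only if its kernel is trivial, and the kernel and range decompose the space complementarily. Dense range rules out a nontrivial kernel. Having obtained $A$ with $R=R(\lambda_0,A)$ and $A\in\mathcal{G}(M,\omega,X)$ from the transferred Hille--Yosida estimates, I then invoke Theorem~\ref{thm:TK} with this $A$: hypothesis $\mathrm{(a)}$ of that theorem is exactly the strong convergence $E_nR(\lambda_0,A_n)P_nx\to R(\lambda_0,A)x$ just established, and its conclusion $\mathrm{(b)}$ gives the desired uniform-on-compacts convergence of the semigroups, where $(T(t))_{t\geq0}$ is the semigroup generated by $A$. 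A small point to verify at the end is that the limiting family is genuinely a $C_0$-semigroup on all of $X$ and not merely on the closure of the range of the $E_n$, but density of $\dom(A)=\mathrm{ran}(R)$ closes this gap.
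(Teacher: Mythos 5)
Your overall route coincides with the paper's: both directions ultimately hinge on Theorem \ref{thm:TK}, and the substance of $\mathrm{(a)}\Rightarrow\mathrm{(b)}$ is to upgrade the strong limits $R(\mu)=\lim_{n\to\infty}E_nR(\mu,A_n)P_n$ to a pseudoresolvent (via the Neumann series for the resolvents together with $P_nE_n=\Id_n$, exactly as in the paper) and then, using the dense range of $R$, to a genuine resolvent of a densely defined operator, to which the first Trotter--Kato theorem is applied. Your direct Laplace-transform argument for $\mathrm{(b)}\Rightarrow\mathrm{(a)}$ is correct (the paper simply cites Theorem \ref{thm:TK} for this direction).

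There is, however, a genuine gap in the key step. You establish the convergence and the resolvent identity only ``for $\mu$ in a neighbourhood of $\lambda_0$'', and then claim that dense range of $R$ rules out a nontrivial kernel and that the growth estimates needed for generation carry over. Both claims require information as $\lambda\to\infty$, not merely near $\lambda_0$. For a general pseudoresolvent, dense range alone does \emph{not} imply trivial kernel: there exist pseudoresolvents with dense range and nontrivial kernel (they are exactly the resolvents of densely defined \emph{multivalued} operators, for which kernel equals the multivalued part and range equals the domain, and no complementary decomposition of the space holds). The standard injectivity argument shows $\lambda R(\lambda)\to\Id$ strongly on the closure of the range as $\lambda\to\infty$, and this, as well as \cite[Chapter III, Cor.~4.7]{EN} and the Hille--Yosida theorem, needs the bound $\left\|\lambda R(\lambda)\right\|\leq M$ along an unbounded sequence of real $\lambda$, whereas your estimate is stated only at $\lambda_0$. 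The missing step is the propagation of the strong convergence from $\lambda_0$ to an unbounded set of $\lambda$: since the series $\sum_{k}(\mu-\lambda)^kE_nR(\mu,A_n)^{k+1}P_n$ converges uniformly in $n$ on disks $\left|\lambda-\mu\right|\leq\alpha\left(\mathrm{Re}(\mu)-\omega\right)$, $\alpha\in\left(0,1\right)$, by the uniform Hille--Yosida bounds, the set $\Gamma$ of those $\lambda$ in the half-plane $\left\{\mathrm{Re}(\lambda)>\omega\right\}$ for which the strong limit exists is both open and closed there, hence by connectedness equals the whole half-plane. This connectedness argument is precisely how the paper closes the gap; once it is inserted, your remaining steps go through as written.
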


\begin{proof}
The implication $\mathrm{(b)}\Rightarrow\mathrm{(a)}$ is just an application of Theorem \ref{thm:TK}. For the converse, assume that the assertion $\mathrm{(a)}$ holds. First of all, we notice that $\left\{R(\lambda):\ \mathrm{Re}(\lambda)>0\right\}$ with
\[
R(\lambda)x:=\lim_{n\to\infty}{E_nR(\lambda,A_n)P_nx},\quad x\in X,
\]
is a pseudoresolvent on $X$ such that
\begin{align}\label{eqn:PseudoResEst}
\left\|\lambda^kR(\lambda)^k\right\|\leq M,\quad k\in\NN.
\end{align}
Recall from \cite[Chapter III, Def.~4.3]{EN} that $\left\{R(\lambda):\ \mathrm{Re}(\lambda)>0\right\}$ is a pseudoresolvent if $R(\lambda)$ is a bounded linear operator for all $\lambda>0$ with $\mathrm{Re}(\lambda)>0$ and the equality
\[
R(\lambda)-R(\mu)=(\lambda-\mu)R(\lambda)R(\mu),
\]
holds for all $\lambda,\mu\in\CC$ with $\mathrm{Re}(\lambda)>0$ and $\mathrm{Re}(\mu)>0$. 

\medskip
To see, that $\left\{R(\lambda):\ \mathrm{Re}(\lambda)>0\right\}$ is indeed a pseudoresolvent, we slightly modify \cite[Chapter III, Prop.~4.4]{EN}. In fact, consider the set 
\[
\Gamma:=\left\{\lambda\in\CC:\ \mathrm{Re}(\lambda)>0,\ \lim_{n\to\infty}{E_nR(\lambda,A_n)P_nx}\ \text{exists for all}\ x\in X\right\}.
\]
By the assumptions of assertion (a) we have that $\Gamma\neq\varnothing$. By \cite[Chapter IV, Prop.~1.3]{EN}, one has that for a given $\mu\in\Gamma$
\[
E_nR(\lambda,A_n)P_n=\sum_{k\in\NN}{(\mu-\lambda)^kE_nR(\mu,A_n)^{k+1}P_n},
\]
whenever $\left|\mu-\lambda\right|<\mathrm{Re}(\mu)$, where the convergence is with respect to the operator norm and uniform in $\left\{\lambda\in\CC:\ \left|\mu-\lambda\right|<\alpha\mathrm{Re}(\mu)\right\}$ for each $\alpha\in\left(0,1\right)$. By the fact that $\sup_{n\in\NN}\left\|E_n\right\|<\infty$ and $\sup_{n\in\NN}\left\|P_n\right\|<\infty$ we see that $E_nR(\lambda,A_n)P_nx$ converges for all $\lambda$ satisfying $\left|\mu-\lambda\right|<\alpha\mathrm{Re}(\mu)$ whenever $n\to\infty$. We conclude, that $\Gamma$ is an open set in $\CC_+:=\left\{\lambda\in\CC:\ \mathrm{Re}(\lambda)>0\right\}$. On the other hand side, let $\lambda$ with $\mathrm{Re}(\lambda)>0$ be an accumulation point of $\Gamma$. For $\alpha\in\left(0,1\right)$ one can find $\mu\in\Gamma$ such that $\left|\mu-\lambda\right|<\alpha\mathrm{Re}(\mu)$. By what we have seen before, $\lambda\in\Gamma$ showing that $\Gamma$ is also closed in $\CC_+$. Since $\CC_+$ is a connected space, we conclude that the only subsets which are both open and closed are $\varnothing$ and $\CC_+$. Since we observed that $\Gamma\neq\varnothing$, we have $\Gamma=\CC_+$.

\medskip
Finally, we have to show that \eqref{eqn:PseudoResEst} holds. To do so, we observe that we have the following estimate for $\lambda>0$ due to the assumption that $A_n\in\mathcal{G}(M,\omega,X_n)$ for each $n\in\NN$
\[
\left\|\lambda R(\lambda)\right\|\leq\lim_{n\to\infty}\left\|E_n\right\|\cdot\left\|\lambda R(\lambda,A_n)\right\|\cdot\left\|P_n\right\|\leq M_1\left\|R(\lambda,A_n)\right\|\leq M_1M_2,
\]
where $M_1:=\sup_{n\in\NN}\left\|E_n\right\|\cdot\sup_{n\in\NN}{\left\|P_n\right\|}\geq0$ and $M_2\geq0$ a constant such that $\left\|\lambda R(\lambda,A_n)\right\|\leq M_2$ which exists since $A_n\in\mathcal{G}(M,\omega,X_n)$ for each $n\in\NN$ as an application of the Hille--Yosida generation theorem for strongly continuous semigroups, cf. \cite[Chapter II, Thm.~3.8]{EN}. This finally leads to the fact that \eqref{eqn:PseudoResEst} is satisfied.

\medskip                                                       
Since by construction $\mathrm{Ran}(R(\lambda))=\mathrm{Ran}(R)$, which is dense by assumption, we conclude that there exists a densely defined operator $(B,\dom(B))$ on $X$ such that $R(\lambda)=R(\lambda,B)$ for $\lambda>0$, cf. \cite[Chapter III, Cor.~4.7]{EN}. Hence, the operator $(B,\dom(B))$ satisfies the following estimate 
\[
\left\|\lambda^kR(\lambda,B)^k\right\|\leq M,\quad k\in\NN,
\]
yielding a bounded strongly continuous semigroup $(T(t))_{t\geq0}$ on $X$. By a second application of Theorem \ref{thm:TK} we conclude that assertion $\mathrm{(b)}$ has to be true.
\end{proof}

\begin{remark}
Notice that Theorem \ref{thm:TK} allows to approximate a given semigroup by other semigroups. However, the assertion of Theorem \ref{thm:TK2} is stronger in the sense, that one has not to know that there exists a semigroup on the space $X$ but that one has approximants which behaves well in the sense that they eventually converge to one semigroup.
\end{remark}

\section{Example}
In this final section we consider an example of a growing sequence $(G_n)_{n\in\NN}$ of networks. We only show the first two elements $G_1$ and $G_2$ of the sequence since it has an obvious pattern.

\begin{minipage}[t]{0.5\textwidth}
\begin{center}
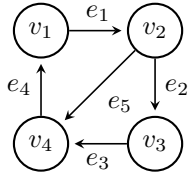

\begin{tikzpicture}[
            > = stealth, 
            shorten > = 2pt, 
            auto,
            node distance = 1.5cm, 
            semithick 
        ]

        \tikzstyle{every state}=[
            draw = black,
            thick,
            fill = white,
            minimum size = 4mm
        ]

        \node[state] (v1) {$v_1$};
        \node[state] (v2) [right of=v1] {$v_2$};
        \node[state] (v3) [below of=v2] {$v_3$};
        \node[state] (v4) [below of=v1] {$v_4$};
        
        \path[->] (v1) edge node {$e_1$} (v2);
        \path[->] (v2) edge node {$e_2$} (v3);
				\path[->] (v3) edge node {$e_3$} (v4);
				\path[->] (v4) edge node {$e_4$} (v1);
				\path[->] (v2) edge node {$e_5$} (v4);
\end{tikzpicture}
\captionof{figure}{Graph of $G_1$}
\end{center}
\end{minipage}
\begin{minipage}[t]{0.5\textwidth}
\begin{center}
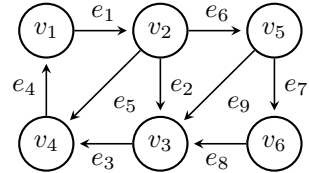

\begin{tikzpicture}[
            > = stealth, 
            shorten > = 2pt, 
            auto,
            node distance = 1.5cm, 
            semithick 
        ]

        \tikzstyle{every state}=[
            draw = black,
            thick,
            fill = white,
            minimum size = 4mm
        ]

        \node[state] (v1) {$v_1$};
        \node[state] (v2) [right of=v1] {$v_2$};
        \node[state] (v3) [below of=v2] {$v_3$};
        \node[state] (v4) [below of=v1] {$v_4$};
				\node[state] (v5) [right of=v2] {$v_5$};
				\node[state] (v6) [right of=v3] {$v_6$};
        
        \path[->] (v1) edge node {$e_1$} (v2);
        \path[->] (v2) edge node {$e_2$} (v3);
				\path[->] (v3) edge node {$e_3$} (v4);
				\path[->] (v4) edge node {$e_4$} (v1);
				\path[->] (v2) edge node {$e_5$} (v4);
				
				\path[->] (v2) edge node {$e_6$} (v5);
				\path[->] (v5) edge node {$e_7$} (v6);
				\path[->] (v6) edge node {$e_8$} (v3);
				\path[->] (v5) edge node {$e_9$} (v3);
				
\end{tikzpicture}
\captionof{figure}{Graph of $G_2$}
\end{center}
\end{minipage}
We assume that the velocities are all equal to $1$. The corresponding weighted (transposed) adjacency matrix of the line graph $\mathbb{B}_1$ and $\mathbb{B}_2$ of the graphs $G_1$ and $G_2$ are given by
\[
\mathbb{B}_1=\begin{pmatrix}
0&0&0&1&0\\
1&0&0&0&0\\
0&1&0&0&0\\
0&0&1&0&1\\
1&0&0&0&0
\end{pmatrix}
\]
and
\[
\mathbb{B}_2=\begin{pmatrix}
0&0&0&1&0&0&0&0&0\\
1&0&0&0&0&0&0&0&0\\
0&1&0&0&0&0&0&1&1\\
0&0&1&0&1&0&0&0&0\\
1&0&0&0&0&0&0&0&0\\
1&0&0&0&0&0&0&0&0\\
0&0&0&0&0&1&0&0&0\\
0&0&0&0&0&0&1&0&0\\
0&0&0&0&0&1&0&0&0\\
\end{pmatrix}
=
\left(\begin{array}{@{}c|c@{}}
\mathbb{B}_1 & 
\begin{matrix}
0&0&0&0\\
0&0&0&0\\
0&0&1&1\\
0&0&0&0\\
0&0&0&0
\end{matrix}\\
\hline
\begin{matrix}
1&0&0&0&0\\
0&0&0&0&0\\
0&0&0&0&0\\
0&0&0&0&0\\
\end{matrix} &
\begin{matrix}
0&0&0&0\\
1&0&0&0\\
0&1&0&0\\
1&0&0&0
\end{matrix}
\end{array}\right)
\]
Since we assumed that the velocities are all equal to one, we can make use of the explicit expression of the transport semigroups $(T_1(t))_{t\geq0}$ and $(T_2(t))_{t\geq0}$ on the networks $G_1$ and $G_2$, respectively, cf. \cite[Sect.~3]{Dorn2008} or \cite[Sect.~18.2]{BKFR2017}. In particular, one has
\[
T_i(t)f(s)=\mathbb{B}_i^kf(x+t-k),
\] 
for $k\in\NN$ such that $k\leq t+x<k+1$ and $f\in\Ell^1\left(\left[0,1\right],\CC^{\left|\E(G_i)\right|}\right)$ where $i=1,2$. By this, one observes that $(T_1(t))_{t\geq0}$ is indeed a restriction of $(T_2(t))_{t\geq0}$.

\section*{Acknowledgement}
This article is based upon work from COST Action CA18232 MAT-DYN-NET, supported by COST (European Cooperation in Science and Technology). The author is very grateful to the anonymous referee suggestions. They helped to improve the article. Especially, the style of the article overall has been improved.

\end{document}